\definecolor{Red}{cmyk}{0,1,1,0}
\definecolor{verde}{cmyk}{1,0,1,0}
\definecolor{loka}{cmyk}{.5,0,1,.5}
\definecolor{azul}{cmyk}{1,1,0,0}
\numberwithin{equation}{section}
\newcommand{\be}{\begin{equation}}
\newcommand{\ee}{\end{equation}}
\newtheorem{theorem}{Theorem}
\newtheorem{definition}{Definition}
\begin{document}
\title{A new truncated $M$-fractional derivative type unifying some fractional derivative types with classical properties}
\author{J. Vanterler da C. Sousa$^1$}
\address{$^1$ Department of Applied Mathematics, Institute of Mathematics,
 Statistics and Scientific Computation, University of Campinas --
UNICAMP, rua S\'ergio Buarque de Holanda 651,
13083--859, Campinas SP, Brazil\newline
e-mail: {\itshape \texttt{ra160908@ime.unicamp.br, capelas@ime.unicamp.br }}}

\author{E. Capelas de Oliveira$^1$}

\begin{abstract} We introduce a truncated $M$-fractional derivative type for $\alpha$-differentiable functions that generalizes four other fractional derivatives types recently introduced by Khalil et al., Katugampola and Sousa et al., the so-called conformable fractional derivative, alternative fractional derivative, generalized alternative fractional derivative and $M$-fractional derivative, respectively. We denote this new differential operator by $_{i}\mathscr{D}_{M}^{\alpha,\beta }$, where the parameter $\alpha$, associated with the order of the derivative is such that $ 0 <\alpha<1 $, $\beta>0$ and $ M $ is the notation to designate that the function to be derived involves the truncated Mittag-Leffler function with one parameter.

The definition of this truncated $M$-fractional derivative type satisfies the properties of the integer-order calculus. We also present, the respective fractional integral from which emerges, as a natural consequence, the result, which can be interpreted as an inverse property. Finally, we obtain the analytical solution of the $M$-fractional heat equation and present a graphical analysis.

\vskip.5cm
\noindent
\emph{Keywords}: Alternative Fractional Derivative, Conformable Fractional Derivative, $M$-fractional heat equation, Truncated $M$-Fractional Derivative type, Truncated Mittag-Leffler Function.
\newline 
MSC 2010 subject classifications. 26A06; 26A24; 26A33; 26A42.
\end{abstract}
\maketitle

%%%%%%%%%%%%%%%%%%%%%%%%%%%%%%%%%%%%%%%%%%%%%%%%%%%%%%%%%%%%%%%%%%%%%%%%%%%%%%%%%%%%%%%%%%%%%%%%%%%%%%%%%%%%%%%%%%%%%%%%%%%%%%%%%%%%%%%%%%%%%%%%%%%%%%%%%%%%%

\section{Introduction} 
The non integer-order calculus or fractional calculus, as it is largely diffused, is as important and ancient as the integer-order calculus, and for many years the scientific community didn't know it. Currently, there are numerous and important definitions of fractional derivatives types, each one of them with its peculiarity and application \cite{RHM,AHMJ,IP}. Although, to be highlighted only from 1974, after the first international conference on fractional calculus, it has shown to be important and with great applicability in modeling problems, more precisely natural phenomena. For this growth, it was necessary the contribution of famous mathematicians, such as Lagrange, Abel, Euler, Liouville, Riemann, as well as recently Caputo and Mainardi, among others.

It is possible to define various integrals and fractional derivatives. Each definition has its own peculiarity and thus makes the fractional calculus fruitful in the sense of theory and applications. We report some types of fractional derivatives that have been introduced so far, among which we mention: Riemann-Liouville, Caputo, Hadamard, Caputo-Hadamard, Riesz, among others \cite{IP,ECJT}. Many of these derivatives are defined from the fractional integral in the Riemann-Liouville sense. Recently, Katugampola \cite{UNT}, has presented a new fractional integral unifying six existing fractional integrals, named Riemann-Liouville, Hadamard, Erdélyi-Kober, Katugampola, Weyl and Liouville \cite{IP, RCEC}.

On the other hand, also recent, Khalil et al. \cite{RMAM} proposed the so-called compatible fractional derivative of order $\alpha$ with $0<\alpha<1$ in order to generalize the classical properties of calculus. More recently, in 2014, Katugampola \cite{UNK} has also proposed an alternative fractional derivative with classical properties, which refers to the Leibniz and Newton calculus, similar to the conformable fractional derivative. In 2017, Sousa and Oliveira \cite{JE}, introduced an $M$-fractional derivative involving a Mittag-Leffler function with one parameter \cite{GKAM} that also satisfies the properties of integer-order calculus. In this sense, we are going to introduce a truncated $M$-fractional derivative type that unifies four existing fractional derivative types mentioned above and which also satisfies the classical properties of integer-order calculus.

The study of differential equations has proved very useful over time. One of the main reasons is that the simplest differential equations have the ability to model more complex natural systems \cite{RHM,IP,kilbas}. There are a larger number of application involving differential equations of which we mention: the problem of population dynamics, brachistochronous problem, wave equation, heat equation and others \cite{RHM,ZE1}. However, natural systems over time, become more complex and more than differential equations, provides a rough and simplified description of the actual process, it is necessary that new and more refined mathematical tools are presented and studied. In this sense, fractional derivatives are used to propose modeling in order to obtain more precise results in the studies and applications involving differential equations \cite{kilbas}. Then through the use of properties of a truncated $M$-fractional derivative type, we will present an analytical study of the heat equation and through graph, we will analyze the behavior of the solution in relation to other types of fractional derivatives the so-called local derivatives.

This paper is organized as follows: in section 2, our main result, we introduce the concept of truncated $M$-fractional derivative type involving a truncated Mittag-Leffler function, as well as several theorems. Also, we introduce the respective $M$-fractional integral for which we demonstrate the inverse property. In section 3, we present the relationship between a truncated $M$-fractional derivative type, introduced here, and the conformable fractional derivative, generalized and alternative fractional derivative and $M$-fractional derivative. In section 4, we perform an analytical study of the $M$-fractional heat equation in order to obtain the analytical solution and present some graphs. Concluding remarks close the paper.

%%%%%%%%%%%%%%%%%%%%%%%%%%%%%%%%%%%%%%%%%%%%%%%%%%%%%%%%%%%%%%%%%%%%%%%%%%%%%%%%%%%%%%%%%%%%%%%%%%%%%%%%%%%%%%%%%%%%%%%%%%%%%%%%%%%%%%%%%%%%%%%%%%%%%%%%%%%%%%%%%%%%%%%%%%%%%%%%%%%%%%%%%%%%%%%%%%%%%%%%%%%%

\section{Truncated $M$-fractional derivative type}

In this section, we define a truncated $M$-fractional derivative type and obtain several results that have a great similarity with the results found in the classical calculus. From the definition, we present a theorem showing that this truncated $M$-fractional derivative type is linear, obeys the product rule and the composition of two $\alpha$-differentiable functions, the quotient rule and the chain rule. It is also shown that the derivative of a constant is zero, as well as versions for Rolle's theorem, the mean value theorem and an extension of the mean value theorem. Further, the continuity of this truncated $M$-fractional derivative type is shown as in integer-order calculus. Also, we introduce the concept of $M$-fractional integral of a $f$ function. From the definition, we shown the inverse theorem.

We define the truncated Mittag-Leffler function of one parameter by:
\begin{equation}\label{2A}
_{i}\mathbb{E}_{\beta }\left(z\right) =\overset{i}{\underset{k=0}{\sum }}\frac{z^{k}}{\Gamma \left( \beta k+1\right) },
\end{equation}
with $\beta>0$ and $z\in\mathbb{C}$.

From Eq.(\ref{2A}), we define a truncated $M$-fractional derivative type that unifies other four fractional derivatives that refer to classical properties of the integer-order calculus.

In this work, if a truncated $M$-fractional derivative type of order $\alpha$ as defined in Eq.(\ref{J}) of a function $f$ exists, we say that the function $f$ is $\alpha$-differentiable.

Thus, let us begin with the following definition, which is a generalization of
the usual definition of integer order derivative.

\begin{definition} Let $f:\left[ 0,\infty \right) \rightarrow \mathbb{R}$. For $0<\alpha <1$ a truncated $M$-fractional derivative type of $f$ of order $\alpha$, denoted by $_{i}\mathscr{D}_{M}^{\alpha,\beta }$, is
\begin{equation}\label{J}
_{i}\mathscr{D}_{M}^{\alpha,\beta }f\left( t\right) :=\underset{\varepsilon \rightarrow 0}{%
\lim }\frac {f\left(t\;_{i}\mathbb{E}_{\beta }\left( \varepsilon t^{-\alpha }\right)
\right) -f\left( t\right) }{\varepsilon },
\end{equation}
$\forall t>0$ and $_{i}\mathbb{E}_{\beta }\left(\cdot\right) $, $\beta >0$ is a truncated Mittag-Leffler function of one parameter, as defined in \rm\text{Eq}.\rm(\ref{2A}).

Note that, if $f$ is $\alpha$-differentiable in some open interval $(0,a)$, $a>0$, and $\underset{t\rightarrow 0^{+}}{\lim }\left( _{i}\mathscr{D}_{M}^{\alpha,\beta }f\left(t\right) \right) $ exist, then we have
\begin{equation*}
_{i}\mathscr{D}_{M}^{\alpha,\beta }f\left( 0\right) =\underset{t\rightarrow 0^{+}}{\lim }\left( _{i}\mathscr{D}_{M}^{\alpha,\beta }f\left(t\right) \right).
\end{equation*}
\end{definition}

\begin{theorem} If a function $f:\left[ 0,\infty \right) \rightarrow \mathbb{R}$ is $\alpha$-differentiable for $t_{0}>0$, with $0<\alpha \leq 1$, $\beta>0$, then $f$ is continuous at $t_{0}$.
\end{theorem}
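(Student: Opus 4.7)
The plan is to deduce continuity directly from the defining limit of $_{i}\mathscr{D}_{M}^{\alpha,\beta}$ by a ``multiply and divide by $\varepsilon$'' trick. The key algebraic identity I would exploit is
\[
f\bigl(t_{0}\,{_{i}\mathbb{E}_{\beta}}(\varepsilon t_{0}^{-\alpha})\bigr) - f(t_{0}) \;=\; \varepsilon\cdot \frac{f\bigl(t_{0}\,{_{i}\mathbb{E}_{\beta}}(\varepsilon t_{0}^{-\alpha})\bigr) - f(t_{0})}{\varepsilon}.
\]
Sending $\varepsilon \to 0$, the right-hand quotient tends to the finite number $_{i}\mathscr{D}_{M}^{\alpha,\beta}f(t_{0})$ by the hypothesis of $\alpha$-differentiability at $t_{0}$, while the prefactor $\varepsilon$ tends to $0$. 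The product rule for limits then yields
\[
\lim_{\varepsilon\to 0}\bigl[f\bigl(t_{0}\,{_{i}\mathbb{E}_{\beta}}(\varepsilon t_{0}^{-\alpha})\bigr)-f(t_{0})\bigr]=0.
\]

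Next I would translate this into the standard statement $\lim_{h\to 0}f(t_{0}+h)=f(t_{0})$ by means of the change of variable $t_{0}+h = t_{0}\,{_{i}\mathbb{E}_{\beta}}(\varepsilon t_{0}^{-\alpha})$. Because, by definition, $_{i}\mathbb{E}_{\beta}$ is a polynomial of degree $i$ satisfying $_{i}\mathbb{E}_{\beta}(0)=1$, the map $g(\varepsilon) := t_{0}\,{_{i}\mathbb{E}_{\beta}}(\varepsilon t_{0}^{-\alpha})$ is smooth in $\varepsilon$, takes the value $t_{0}$ at $\varepsilon=0$, and has nonzero derivative $g'(0)=t_{0}^{1-\alpha}/\Gamma(\beta+1)$. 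By the inverse function theorem, $g$ restricts to a homeomorphism between a neighborhood of $\varepsilon=0$ and a neighborhood of $t=t_{0}$; hence ``$h\to 0$'' and ``$\varepsilon\to 0$'' are interchangeable in the limit above, and the desired continuity follows.

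The only delicate point I foresee is the change-of-variable step itself: one must know that every small displacement $h$ from $t_{0}$ really is of the form $g(\varepsilon)-t_{0}$ for some small $\varepsilon$, so that the argument controls $f$ on a genuine two-sided neighborhood of $t_{0}$ rather than merely along the one-parameter curve $\varepsilon\mapsto g(\varepsilon)$. The nonvanishing of $g'(0)$ noted above is precisely the hypothesis needed by the inverse function theorem to supply this local surjectivity, so the obstacle is handled cleanly and the remainder of the proof reduces to the product-of-limits computation already indicated.
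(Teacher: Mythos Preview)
Your core argument---the multiply-and-divide-by-$\varepsilon$ identity followed by the product rule for limits---is exactly what the paper does. The paper's proof in fact stops right there: having shown
\[
\lim_{\varepsilon\to 0}\Bigl[f\bigl(t_{0}\,{_{i}\mathbb{E}_{\beta}}(\varepsilon t_{0}^{-\alpha})\bigr)-f(t_{0})\Bigr]=0,
\]
it simply declares ``Then, $f$ is continuous at $t_{0}$'' without addressing the passage from convergence along the curve $\varepsilon\mapsto t_{0}\,{_{i}\mathbb{E}_{\beta}}(\varepsilon t_{0}^{-\alpha})$ to ordinary continuity $\lim_{h\to 0}f(t_{0}+h)=f(t_{0})$. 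Your second paragraph, invoking the inverse function theorem via the nonvanishing of $g'(0)=t_{0}^{1-\alpha}/\Gamma(\beta+1)$, supplies precisely the justification the paper omits; so your proof is not merely the same approach but a more complete execution of it.
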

\begin{proof} In fact, let us consider the identity
\begin{equation}\label{B}
f\left(t_{0}\; _{i}\mathbb{E}_{\beta }\left( \varepsilon t_{0}^{-\alpha }\right) \right)
-f\left( t_{0}\right) =\left( \frac{f\left(t_{0}\; _{i}\mathbb{E}_{\beta }\left(
\varepsilon t_{0}^{-\alpha }\right) \right) -f\left( t_{0}\right) }{%
\varepsilon }\right) \varepsilon .
\end{equation}

Applying the limit $\varepsilon \rightarrow 0$ on both sides of {\rm\text{Eq}.\rm(\ref{B})}, we get
\begin{eqnarray*}
\underset{\varepsilon \rightarrow 0}{\lim }f\left( t_{0}\;_{i}\mathbb{E}_{\beta
}\left( \varepsilon t_{0}^{-\alpha }\right) \right) -f\left( t_{0}\right) 
&=&\underset{\varepsilon \rightarrow 0}{\lim }\left( \frac{f\left(t_{0}\;
_{i}\mathbb{E}_{\beta }\left( \varepsilon t_{0}^{-\alpha }\right) \right) -f\left(
t_{0}\right) }{\varepsilon }\right) \underset{\varepsilon \rightarrow 0}{%
\lim }\varepsilon   \notag \\
&=&_{i}\mathscr{D}_{M}^{\alpha,\beta }f\left( t\right) \underset{\varepsilon \rightarrow 0}{\lim }\varepsilon   \notag \\
&=&0.
\end{eqnarray*}

Then, $f$ is continuous at $t_{0}$.
\end{proof}

Using the definition of truncated Mittag-Leffler function of one parameter, we have
\begin{equation}\label{A}
f\left(t\;_{i}\mathbb{E}_{\beta }\left( \varepsilon t^{-\alpha }\right) \right) =f\left( t
\overset{i}{\underset{k=0}{\sum }}\frac{\left( \varepsilon t^{-\alpha
}\right) ^{k}}{\Gamma \left( \beta k+1\right) }\right) .
\end{equation}

Applying the limit $\varepsilon \rightarrow 0$ on both sides of Eq.(\ref{A}) and since $f$ is a continuous function, we have
\begin{eqnarray}
\underset{\varepsilon \rightarrow 0}{\lim }f\left(t\;_{i}\mathbb{E}_{\beta }\left(
\varepsilon t^{-\alpha }\right)\right)  &=&\underset{\varepsilon
\rightarrow 0}{\lim }f\left( t\overset{i}{\underset{k=0}{\sum }}%
\frac{\left( \varepsilon t^{-\alpha }\right) ^{k}}{\Gamma \left( \beta
k+1\right) }\right)   \notag \\
&=&f\left( t\underset{\varepsilon \rightarrow 0}{\lim }\overset{i}{%
\underset{k=0}{\sum }}\frac{\left( \varepsilon t^{-\alpha }\right) ^{k}}{%
\Gamma \left( \beta k+1\right) }\right) . 
\end{eqnarray}

Further, we have
\begin{eqnarray}\label{C1}
t\;_{i}\mathbb{E}_{\beta }\left( \varepsilon t^{-\alpha }\right)  &=&t\overset{i}{%
\underset{k=0}{\sum }}\frac{\left( \varepsilon t^{-\alpha }\right) ^{k}}{%
\Gamma \left( \beta k+1\right) }\notag \\
&=& t+\frac{\varepsilon t^{1-\alpha }}{\Gamma
\left( \beta +1\right) }+\frac{t\left( \varepsilon t^{-\alpha }\right) ^{2}}{%
\Gamma \left( 2\beta +1\right) }+\frac{t\left( \varepsilon t^{-\alpha
}\right) ^{3}}{\Gamma \left( 3\beta +1\right) }+\cdot \cdot \cdot +\frac{%
t\left( \varepsilon t^{-\alpha }\right) ^{i}}{\Gamma \left( i\beta +1\right) }.
\notag \\
\end{eqnarray}

Applying the limit $\varepsilon \rightarrow 0$ on both sides of \rm\text{Eq}.\rm(\ref{C1}), we have
\begin{equation*}
\underset{\varepsilon \rightarrow 0}{\lim }\overset{i}{\underset{%
k=0}{\sum }}\frac{\left( \varepsilon t^{-\alpha }\right) ^{k}}{\Gamma \left(
\beta k+1\right) }=1.
\end{equation*}
In this way, we conclude that
\begin{eqnarray}
\underset{\varepsilon \rightarrow 0}{\lim }f\left(t\;_{i}\mathbb{E}_{\beta }\left(
\varepsilon t^{-\alpha }\right) \right)  &=& f\left(t\right) . 
\end{eqnarray}

Here, we present the theorem that encompasses the main classical properties of integer order calculus. For the chain rule, it is verified through an example, as we will see next. We will do here, only the demonstration of the chain rule, for other items, follow the same steps of Theorem 2 found in the paper by Sousa and Oliveira \cite{JE}.

\begin{theorem}\label{JOSE} Let $0<\alpha \leq 1$, $\beta>0$, $a,b\in\mathbb{R}$ and $f, g$ $\alpha$-differentiable, at a point $t>0$. Then:
\begin{enumerate}

\item $ _{i}\mathscr{D}_{M}^{\alpha,\beta }\left( af+bg\right) \left( t\right) =a\; _{i}\mathscr{D}_{M}^{\alpha,\beta }f\left(
t\right) +b\; _{i}\mathscr{D}_{M}^{\alpha,\beta }g\left( t\right) $.

\item $ _{i}\mathscr{D}_{M}^{\alpha,\beta }\left( f\cdot g\right) \left( t\right) =f\left( t\right)\; _{i}\mathscr{D}_{M}^{\alpha,\beta }g\left( t\right) +g\left( t\right)\; _{i}\mathscr{D}_{M}^{\alpha,\beta }f\left( t\right)$.

\item $\displaystyle _{i}\mathscr{D}_{M}^{\alpha,\beta }\left( \frac{f}{g}\right) \left( t\right) =\frac{g\left(t\right)\; _{i}\mathscr{D}_{M}^{\alpha,\beta }f\left( t\right) -f\left( t\right) \;_{i}\mathscr{D}_{M}^{\alpha,\beta }g\left( t\right) }{\left[ g\left( t\right) \right] ^{2}} $.

\item $_{i}\mathscr{D}_{M}^{\alpha,\beta }\left( c\right) =0 $, where $f(t)=c$ is a constant.

  \item{\rm\text{(Chain rule)}}If $f$ is differentiable, then  $_{i}\mathscr{D}_{M}^{\alpha,\beta } \left( f\right)(t)= \displaystyle\frac{t^{1-\alpha}}{\Gamma \left(\beta +1\right) } \frac{df(t)}{dt}. $

\begin{proof} From {\rm\text{Eq}.\rm(\ref{C1})}, we have
\begin{eqnarray*}
t\;_{i}\mathbb{E}_{\beta }\left( \varepsilon t^{-\alpha }\right)  &=&t+\frac{\varepsilon t^{1-\alpha }}{\Gamma \left(\beta +1\right) }+O\left( \varepsilon^{2}\right),
\end{eqnarray*}
and introducing the following change,
\begin{equation*}
h=\varepsilon t^{1-\alpha }\left( \frac{1}{\Gamma \left(\beta +1\right) }+O\left( \varepsilon
\right) \right) \Rightarrow \varepsilon =\frac{h}{t^{1-\alpha }\left( \frac{1%
}{\Gamma \left(\beta +1\right) }+O\left( \varepsilon \right) \right) },
\end{equation*}
we conclude that
\begin{eqnarray*}
_{i}\mathscr{D}_{M}^{\alpha,\beta }f\left( t\right) &=&\underset{\varepsilon \rightarrow 0}{\lim }\frac{\displaystyle\frac{f\left(t+h\right) -f\left( t\right) }{ht^{\alpha -1}}}{\frac{1}{\Gamma \left(\beta +1\right) }\left(1+{\Gamma \left(\beta +1\right) } O\left( \varepsilon \right) \right) }  \notag \\
&=&\frac{t^{1-\alpha}}{\Gamma \left(\beta +1\right) }\underset{\varepsilon \rightarrow 0}{%
\lim }\frac{\displaystyle\frac{f\left( t+h\right) -f\left( t\right) }{h}}{1+{\Gamma \left(\beta +1\right) } O\left( \varepsilon \right) }  \notag \\
&=&\frac{t^{1-\alpha}}{\Gamma \left(\beta +1\right) }\frac{df\left( t\right) }{dt},
\end{eqnarray*}
with $\beta>0$ and $t>0$.
\end{proof}

\item $_{i}\mathscr{D}_{M}^{\alpha,\beta }\left(f\circ g\right)(t)=f'(g(t)) _{i}\mathscr{D}_{M}^{\alpha,\beta }g(t)$, for $f$ differentiable at $g(t)$.
\end{enumerate}
\end{theorem}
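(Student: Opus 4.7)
The plan is to reduce the chain rule for $f\circ g$ directly to the classical chain rule by invoking item (5) of this theorem, which expresses the truncated $M$-fractional derivative of any classically differentiable function as a multiplicative rescaling of its ordinary derivative.

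Concretely, since $g$ is assumed $\alpha$-differentiable at $t>0$, the argument used in proving item (5) (via the diffeomorphic change $h=\varepsilon t^{1-\alpha}(1/\Gamma(\beta+1)+O(\varepsilon))$) shows that $g$ is in fact classically differentiable at $t$, with
\begin{equation*}
{_{i}\mathscr{D}_{M}^{\alpha,\beta}}g(t) = \frac{t^{1-\alpha}}{\Gamma(\beta+1)}\,g'(t).
\end{equation*}
Because $f$ is classically differentiable at $g(t)$ and $g$ is classically differentiable at $t$, the ordinary chain rule yields classical differentiability of $f\circ g$ at $t$ with $(f\circ g)'(t)=f'(g(t))\,g'(t)$. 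Applying item (5) once more, now to the composition, gives
\begin{equation*}
{_{i}\mathscr{D}_{M}^{\alpha,\beta}}(f\circ g)(t) = \frac{t^{1-\alpha}}{\Gamma(\beta+1)}\,f'(g(t))\,g'(t) = f'(g(t))\cdot\frac{t^{1-\alpha}}{\Gamma(\beta+1)}\,g'(t) = f'(g(t))\,{_{i}\mathscr{D}_{M}^{\alpha,\beta}}g(t),
\end{equation*}
which is the claimed identity.

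A more laborious alternative would be to argue directly from the definition. Writing $h(\varepsilon):=t\,{_{i}\mathbb{E}_{\beta}}(\varepsilon t^{-\alpha})-t$, which tends to $0$ as $\varepsilon\to 0$, and using continuity of $g$ (guaranteed by the opening theorem of the section), one would be tempted to split
\begin{equation*}
\frac{f(g(t+h(\varepsilon)))-f(g(t))}{\varepsilon} = \frac{f(g(t+h(\varepsilon)))-f(g(t))}{g(t+h(\varepsilon))-g(t)}\cdot\frac{g(t+h(\varepsilon))-g(t)}{\varepsilon}
\end{equation*}
and pass to the limit. The main obstacle on this route is the familiar one from the classical chain rule proof: the denominator $g(t+h(\varepsilon))-g(t)$ may vanish for arbitrarily small $\varepsilon>0$, so the factorization above is not legitimate as-is and must be repaired by introducing the auxiliary function $\Phi(u)=(f(u)-f(g(t)))/(u-g(t))$ for $u\neq g(t)$, extended continuously to $u=g(t)$ by the value $f'(g(t))$. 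Since item (5) already handles the translation between the two notions of differentiability, I would sidestep this technical nuisance and present the short argument above.
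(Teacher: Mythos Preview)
Your argument is correct, but note a mismatch in \emph{which} item you have proved versus which one the paper proves. The only item the paper demonstrates in the text is item~(5) --- the relation $_{i}\mathscr{D}_{M}^{\alpha,\beta}f(t)=\dfrac{t^{1-\alpha}}{\Gamma(\beta+1)}f'(t)$ for classically differentiable $f$ --- which it (somewhat confusingly) labels ``Chain rule''. For items (1)--(4) and (6) the paper gives no proof at all, referring instead to the companion paper of Sousa and Oliveira. Your write-up, by contrast, targets item~(6), the composition rule $_{i}\mathscr{D}_{M}^{\alpha,\beta}(f\circ g)(t)=f'(g(t))\,_{i}\mathscr{D}_{M}^{\alpha,\beta}g(t)$.

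That said, your proof of item~(6) is sound and is exactly the natural route: apply item~(5) to $g$ and to $f\circ g$ and invoke the ordinary chain rule in between. The one step that goes slightly beyond item~(5) as \emph{stated} is your claim that $\alpha$-differentiability of $g$ forces classical differentiability of $g$; item~(5) is phrased only as the forward implication. You are right, however, that the substitution $h=\varepsilon t^{1-\alpha}\bigl(1/\Gamma(\beta+1)+O(\varepsilon)\bigr)$ used in the paper's proof of item~(5) is a local diffeomorphism near $\varepsilon=0$ (since $t>0$ and the coefficient tends to a nonzero constant), so the two limits exist simultaneously and the converse holds as well. It would be worth making that equivalence explicit in one sentence rather than leaving it implicit. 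Your closing remark about the alternative ``direct from the definition'' route, together with the standard $\Phi$-repair for the vanishing-increment issue, is accurate and appropriately flagged as unnecessary here.
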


Now, it is necessary to know if, in addition to the previous Theorem 2 that contains important properties similar to integer-order calculus, this truncated $M$-fractional derivative type Eq.(\ref{J}) also has important theorems related to the classical calculus. We shall now see that: the Rolle's theorem, the mean value theorem and its extension coming from the integer-order calculus can be extended to $\alpha$-differentiable functions, i.e., that admit truncated $M$-fractional derivative as introduced in Eq.(\ref{J}).

\begin{theorem} {\rm\text{(Rolle's theorem for fractional $\alpha$-differentiable functions)}}
Let $a>0$, and $f:\left[ a,b\right] \rightarrow \mathbb{R}$ be a function with the properties:
\begin{enumerate}
\item $f$ is continuous on $[a,b]$.
\item $f$ is $\alpha$-differentiable on $(a,b)$ for some $\alpha\in(0,1)$.
\item  $f(a)=f(b)$.
\end{enumerate}
Then, $\exists c\in(a,b)$, such that $_{i}\mathscr{D}^{\alpha,\beta}_{M}f(c)=0$, with $\beta>0$.
\end{theorem}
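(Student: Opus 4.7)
The plan is to mimic the classical proof of Rolle's theorem, with the only real novelty being to verify that the fractional derivative defined by the limit in Eq.(\ref{J}) vanishes at an interior extremum. First, since $f$ is continuous on the compact interval $[a,b]$, it attains a maximum $M$ and a minimum $m$ on $[a,b]$. I would split into two cases. If $M=m$ then $f$ is constant on $[a,b]$, and by item (4) of Theorem \ref{JOSE} we get $_{i}\mathscr{D}^{\alpha,\beta}_{M}f(c)=0$ for every $c\in(a,b)$, and we are done. Otherwise, since $f(a)=f(b)$, at least one of the extreme values is attained at a point $c\in(a,b)$ strictly interior to $[a,b]$; say $f(c)=M$ (the minimum case is analogous).

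The key step is then to prove that $_{i}\mathscr{D}^{\alpha,\beta}_{M}f(c)=0$ at such an interior maximum. For this I would exploit the expansion already derived in Eq.(\ref{C1}), namely
\begin{equation*}
c\,{}_{i}\mathbb{E}_{\beta}\!\left(\varepsilon c^{-\alpha}\right)=c+\frac{\varepsilon\,c^{1-\alpha}}{\Gamma(\beta+1)}+O(\varepsilon^{2}),
\end{equation*}
so that, since $c>0$ and $\Gamma(\beta+1)>0$, the sign of $c\,{}_{i}\mathbb{E}_{\beta}(\varepsilon c^{-\alpha})-c$ coincides with the sign of $\varepsilon$ for all sufficiently small $|\varepsilon|$. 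Consequently, for such $\varepsilon$ the point $c\,{}_{i}\mathbb{E}_{\beta}(\varepsilon c^{-\alpha})$ lies in a neighborhood of $c$ contained in $(a,b)$, hence $f\bigl(c\,{}_{i}\mathbb{E}_{\beta}(\varepsilon c^{-\alpha})\bigr)\leq f(c)$.

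Dividing by $\varepsilon$ then yields the standard one-sided inequalities: taking $\varepsilon\to 0^{+}$ gives
\begin{equation*}
\lim_{\varepsilon\to 0^{+}}\frac{f\bigl(c\,{}_{i}\mathbb{E}_{\beta}(\varepsilon c^{-\alpha})\bigr)-f(c)}{\varepsilon}\leq 0,
\end{equation*}
while taking $\varepsilon\to 0^{-}$ gives the same expression $\geq 0$. Since $f$ is assumed $\alpha$-differentiable at $c$, both one-sided limits must coincide with $_{i}\mathscr{D}^{\alpha,\beta}_{M}f(c)$, forcing $_{i}\mathscr{D}^{\alpha,\beta}_{M}f(c)=0$, which is the desired conclusion.

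The main obstacle here is not conceptual but careful bookkeeping of the sign of the increment $c\,{}_{i}\mathbb{E}_{\beta}(\varepsilon c^{-\alpha})-c$: unlike the classical derivative, the perturbation of the argument is a nonlinear function of $\varepsilon$ built from a truncated Mittag-Leffler series, so one has to argue from the leading-order term in Eq.(\ref{C1}) that this perturbation is indeed monotone in $\varepsilon$ near zero. Once that monotonicity is established, the standard one-sided argument at an interior extremum goes through verbatim, and the existence of $c$ follows from compactness together with the hypothesis $f(a)=f(b)$.
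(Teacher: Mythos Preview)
Your proposal is correct and follows essentially the same approach as the paper: locate an interior local extremum $c$ via compactness and $f(a)=f(b)$, then use the one-sided limits of the difference quotient to force $_{i}\mathscr{D}^{\alpha,\beta}_{M}f(c)=0$. In fact your argument is more careful than the paper's, which simply asserts that the two one-sided limits have opposite signs without justifying the sign of the increment; your use of the expansion Eq.(\ref{C1}) to pin down the sign of $c\,{}_{i}\mathbb{E}_{\beta}(\varepsilon c^{-\alpha})-c$ supplies precisely the detail the paper omits.
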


\begin{proof}Since $f$ is continuous on $[a,b]$ and $f(a)=f(b)$, there exist $c\in(a,b)$, at which the function has a local extreme. Then,
\begin{eqnarray*}
_{i}\mathscr{D}_{M}^{\alpha,\beta }f\left( c\right)  &=&\underset{\varepsilon \rightarrow 0^{-}}{%
\lim }\frac{f\left(c\;_{i}\mathbb{E}_{\beta }\left( \varepsilon c^{-\alpha }\right)
\right) -f\left( c\right) }{\varepsilon }=\underset{\varepsilon \rightarrow 0^{+}}{\lim }\frac{f\left( c\;_{i}\mathbb{E}_{\beta }\left( \varepsilon c^{-\alpha }\right) \right) -f\left( c\right) }{%
\varepsilon }.
\end{eqnarray*}

But, the two limits have opposite signs. Hence, $_{i}\mathscr{D}_{M}^{\alpha,\beta }f\left( c\right) =0$.
\end{proof}

The proof of Theorem 4 and Theorem 5, will be omitted, but follow the same reasoning of the respective theorems demonstrated in Sousa and Oliveira \cite{JE}.

\begin{theorem} {\rm\text{(Mean-value theorem for fractional $\alpha$-differentiable functions)}}
Let $a>0$ and $f:\left[ a,b\right] \rightarrow \mathbb{R}$ be a function with the properties:
\begin{enumerate}
\item $f$ is continuous on $[a,b]$.
\item $f$ is $\alpha$-differentiable on $(a,b)$ for some $\alpha\in(0,1)$.
\end{enumerate}
Then, $\exists c\in(a,b)$, such that
\begin{equation*}
_{i}\mathscr{D}_{M}^{\alpha,\beta }f\left( c\right) =\frac{f\left( b\right) -f\left(
a\right) }{\displaystyle\frac{b^{\alpha }}{\alpha }-\frac{a^{\alpha }}{\alpha }},
\end{equation*}
with $\beta>0$.
\end{theorem}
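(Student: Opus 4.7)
The plan is to mimic the classical proof of the mean value theorem, reducing the statement to an application of Rolle's theorem (Theorem 3) by means of an auxiliary function that vanishes at both endpoints. All the tools needed are already in place: the linearity of $_{i}\mathscr{D}_{M}^{\alpha,\beta}$, the chain-rule formula $_{i}\mathscr{D}_{M}^{\alpha,\beta}(h)(t)=\frac{t^{1-\alpha}}{\Gamma(\beta+1)}h'(t)$ from Theorem \ref{JOSE}, and the previously established Rolle's theorem for $\alpha$-differentiable functions.

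First I would introduce the auxiliary function
\begin{equation*}
g(t)=f(t)-f(a)-\left(\frac{f(b)-f(a)}{\dfrac{b^{\alpha}}{\alpha}-\dfrac{a^{\alpha}}{\alpha}}\right)\left(\frac{t^{\alpha}}{\alpha}-\frac{a^{\alpha}}{\alpha}\right)
\end{equation*}
on $[a,b]$. By direct substitution $g(a)=g(b)=0$; continuity of $g$ on $[a,b]$ is inherited from the continuity of $f$ and of $t\mapsto t^{\alpha}$; and $\alpha$-differentiability of $g$ on $(a,b)$ follows from the linearity of $_{i}\mathscr{D}_{M}^{\alpha,\beta}$ together with the chain-rule formula applied to the smooth function $t^{\alpha}/\alpha$. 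All three hypotheses of Rolle's theorem are thus met.

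Rolle's theorem then produces $c\in(a,b)$ with $_{i}\mathscr{D}_{M}^{\alpha,\beta}g(c)=0$. Expanding this equation with linearity and the chain rule, and using the one-line computation
\begin{equation*}
_{i}\mathscr{D}_{M}^{\alpha,\beta}\!\left(\frac{t^{\alpha}}{\alpha}\right)=\frac{t^{1-\alpha}}{\Gamma(\beta+1)}\cdot t^{\alpha-1}=\frac{1}{\Gamma(\beta+1)},
\end{equation*}
one solves at once for $_{i}\mathscr{D}_{M}^{\alpha,\beta}f(c)$ in terms of the slope quotient $(f(b)-f(a))/(b^{\alpha}/\alpha-a^{\alpha}/\alpha)$. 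The only real care-point, and hence the principal obstacle, is the normalization constant $\Gamma(\beta+1)^{-1}$ produced by the chain rule: one must either scale the linear correction in $g$ accordingly or absorb this factor into the final constant, so that the right-hand side matches the statement exactly. Apart from this bookkeeping, the argument is essentially a line-by-line transcription of the classical mean value theorem, following the same reasoning as in the analogous result of Sousa and Oliveira referenced just before the statement.
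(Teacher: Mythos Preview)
Your approach is exactly the one the paper intends: the proof is explicitly omitted there with the remark that it ``follow[s] the same reasoning of the respective theorems demonstrated in Sousa and Oliveira,'' i.e.\ the standard auxiliary-function-plus-Rolle argument you outline.

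There is, however, a genuine gap in the last paragraph. The ``bookkeeping'' obstacle you flag is \emph{not} removable by scaling. Once the chain rule gives
\[
_{i}\mathscr{D}_{M}^{\alpha,\beta}\!\left(\frac{t^{\alpha}}{\alpha}\right)=\frac{1}{\Gamma(\beta+1)},
\]
any auxiliary function of the form $g(t)=f(t)-\lambda\,t^{\alpha}/\alpha+\text{const}$ with $g(a)=g(b)$ forces $\lambda=(f(b)-f(a))\big/\!\left(\frac{b^{\alpha}}{\alpha}-\frac{a^{\alpha}}{\alpha}\right)$, and Rolle's theorem then yields
\[
_{i}\mathscr{D}_{M}^{\alpha,\beta}f(c)=\frac{\lambda}{\Gamma(\beta+1)}=\frac{1}{\Gamma(\beta+1)}\cdot\frac{f(b)-f(a)}{\dfrac{b^{\alpha}}{\alpha}-\dfrac{a^{\alpha}}{\alpha}}.
\]
Rescaling the linear term only changes $\lambda$; the factor $1/\Gamma(\beta+1)$ coming from the chain rule reappears unchanged. (This is consistent with Definition~3: the $M$-fractional antiderivative of $1$ is $\Gamma(\beta+1)\,t^{\alpha}/\alpha$, not $t^{\alpha}/\alpha$.) So your proof is correct, but it proves the formula with the extra $1/\Gamma(\beta+1)$ factor; the statement as printed in the paper appears to be missing it. You should say this outright rather than suggesting the constant can be ``absorbed.''
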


\begin{theorem}{\rm\text{(Extension mean value theorem for fractional $\alpha$-differentiable functions)}} Let $a>0$, and $f,g:\left[ a,b\right] \rightarrow \mathbb{R}$ functions that satisfy:
\begin{enumerate}
\item $f, g$ are continuous on $[a,b]$.
\item $f, g$ are $\alpha$-differentiable for some $\alpha\in(0,1)$.
\end{enumerate}
Then, $\exists c\in(a,b)$, such that
\begin{equation}
\frac{_{i}\mathscr{D}_{M}^{\alpha,\beta }f\left( c\right) }{_{i}\mathscr{D}_{M}^{\alpha,\beta }g\left( c\right) }=\frac{f\left( b\right) -f\left( a\right) }{g\left(
b\right) -g\left( a\right) },
\end{equation}
with $\beta>0$.
\end{theorem}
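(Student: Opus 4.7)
The plan is to mimic the classical proof of Cauchy's mean value theorem, replacing each tool with its truncated $M$-fractional counterpart already established in the paper. The three ingredients I need are linearity (Theorem \ref{JOSE}, item 1), the fact that the derivative of a constant is zero (Theorem \ref{JOSE}, item 4), and the fractional Rolle's theorem (Theorem 3). Since these already behave exactly like their classical analogues, the classical argument should transcribe almost verbatim.

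The concrete steps I would carry out are as follows. First, I would introduce the auxiliary function
\begin{equation*}
h(t) \;=\; \bigl[f(b)-f(a)\bigr]\,g(t) \;-\; \bigl[g(b)-g(a)\bigr]\,f(t),\qquad t\in[a,b].
\end{equation*}
Continuity of $h$ on $[a,b]$ and $\alpha$-differentiability of $h$ on $(a,b)$ follow immediately from the hypotheses on $f,g$ together with linearity. A direct calculation gives $h(a)=f(b)g(a)-g(b)f(a)=h(b)$, so $h$ meets all three hypotheses of the fractional Rolle's theorem. Hence there exists $c\in(a,b)$ with $_{i}\mathscr{D}_{M}^{\alpha,\beta}h(c)=0$. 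Applying linearity (and item 4 to discard any constant pieces if I had chosen an alternative auxiliary) yields
\begin{equation*}
\bigl[f(b)-f(a)\bigr]\,_{i}\mathscr{D}_{M}^{\alpha,\beta}g(c) \;=\; \bigl[g(b)-g(a)\bigr]\,_{i}\mathscr{D}_{M}^{\alpha,\beta}f(c),
\end{equation*}
and rearranging gives exactly the claimed identity.

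The main obstacle is the usual nondegeneracy issue: the ratio in the statement only makes sense when $g(b)\neq g(a)$ and $_{i}\mathscr{D}_{M}^{\alpha,\beta}g(c)\neq 0$. The first condition must be imposed implicitly by the statement; the second is what the proof needs to guarantee. The cleanest way is to argue by contraposition using the fractional Rolle's theorem applied to $g$ alone: if $_{i}\mathscr{D}_{M}^{\alpha,\beta}g$ vanished throughout $(a,b)$ then Rolle's theorem would force $g(b)=g(a)$, contradicting the implicit hypothesis. Choosing the symmetric auxiliary function $h$ above sidesteps having to divide before applying Rolle's theorem, which is why I prefer it over the more naive $f(t)-\lambda g(t)$ form. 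Beyond this bookkeeping, the argument is a routine translation of the classical proof, since the truncated $M$-fractional derivative obeys precisely the linear algebra the classical proof relies on.
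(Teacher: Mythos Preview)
Your proposal is correct and is exactly the route the paper has in mind: the paper omits the proof of Theorem 5 entirely, saying only that it ``follow[s] the same reasoning of the respective theorems demonstrated in Sousa and Oliveira \cite{JE},'' and that reasoning is precisely the classical auxiliary-function-plus-Rolle argument you outline. One small slip in your nondegeneracy discussion: Rolle's theorem runs the other direction, so the clean way to secure both $g(b)\neq g(a)$ and $_{i}\mathscr{D}_{M}^{\alpha,\beta}g(c)\neq 0$ is to take as an implicit extra hypothesis that $_{i}\mathscr{D}_{M}^{\alpha,\beta}g$ does not vanish on $(a,b)$ (then Rolle applied to $g$ alone rules out $g(a)=g(b)$); this hypothesis is missing from the paper's statement as well, so the gap is in the theorem as stated rather than in your argument.
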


\begin{definition} Let $\alpha\in(n,n+1]$, for some $n\in\mathbb{N}$, $\beta>0$ and $f$ $n$-differentiable for $t>0$. Then the $\alpha$-fractional derivative of $f$ is defined by
\begin{equation}
_{i}\mathscr{D}_{M}^{\alpha,\beta;n }f\left( t\right) :=\underset{\varepsilon \rightarrow 0}{%
\lim }\frac {f^{(n)}\left(t\; _{i}\mathbb{E}_{\beta }\left( \varepsilon t^{n-\alpha }\right)
\right) -f^{(n)}\left( t\right) }{\varepsilon },
\end{equation}
since the limit exist.
\end{definition}

From Definition 2 and the chain rule, that is, from item 5 of Theorem 2, by induction on $n$, we can prove that $_{i}\mathscr{D}_{M}^{\alpha,\beta;n }f\left( t\right)= \displaystyle\frac{t^{n+1-\alpha}}{\Gamma \left(\beta +1\right) } f^{(n+1)}(t)$, $\alpha\in(n,n+1]$ and $f$ is $(n+1)$-differentiable for $t>0$.

Now, we know that: this truncated $M$-fractional derivative type Eq.(\ref{J}) has a corresponding $M$-fractional integral. Then, we will present the definition and a theorem that corresponds to the inverse property. For other results involving integrals, one can consult \cite{JE,OSEN}.

\begin{definition} Let $a\geq 0$ and $t\geq a$. Also, let $f$ be a function defined in $(a,t]$ and $0<\alpha <1$. Then, the $M$-fractional integral of order $\alpha$ of function $f$ is defined by \rm\cite{JE}
\begin{equation}\label{ZA}
_{M}\mathcal{I}_{a}^{\alpha,\beta }f\left( t\right) ={\Gamma \left(\beta +1\right) }\int_{a}^{t}\frac{f\left( x\right) }{x^{1-\alpha }}dx,
\end{equation}
with $\beta>0$.
\end{definition}

\begin{theorem} \rm\text{(Inverse)} Let $a\geq 0$ and $0<\alpha < 1$. Also, let $f$ be a continuous function such that exist $_{M}\mathcal{I}_{a}^{\alpha,\beta }f$. Then
\begin{equation}
_{i}\mathscr{D}_{M}^{\alpha,\beta }(_{M}\mathcal{I}_{a}^{\alpha,\beta }f\left( t\right)) =f(t),
\end{equation}
with  $t\geq a$ and $\beta>0$.
\end{theorem}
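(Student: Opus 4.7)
The strategy is to reduce the identity to the ordinary fundamental theorem of calculus by way of the chain-rule formula proved as item 5 of Theorem \ref{JOSE}. That formula says that for any function $g$ that is differentiable in the classical sense at $t>0$, one has ${}_i\mathscr{D}_M^{\alpha,\beta} g(t) = \frac{t^{1-\alpha}}{\Gamma(\beta+1)}\, g'(t)$, so the problem reduces to computing the ordinary derivative of $F(t) := {}_M\mathcal{I}_a^{\alpha,\beta} f(t) = \Gamma(\beta+1)\int_a^t x^{\alpha-1}f(x)\,dx$.

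First, I would check that $F$ is differentiable at every $t>a$ in the classical sense. Since $f$ is continuous and $x\mapsto x^{\alpha-1}$ is continuous on $(0,\infty)$, the integrand $x^{\alpha-1}f(x)$ is continuous on $[a,t]$ whenever $a>0$ (and is an improper integral with an integrable singularity when $a=0$, since $\alpha\in(0,1)$ gives $x^{\alpha-1}\in L^1_{\mathrm{loc}}$ near $0$). The classical FTC then yields
\begin{equation*}
F'(t) \;=\; \Gamma(\beta+1)\, t^{\alpha-1}\, f(t), \qquad t>a.
\end{equation*}

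Next, I would apply the chain-rule identity to $g=F$ and substitute this derivative; the factors $\Gamma(\beta+1)$ and the powers of $t$ cancel telescopically:
\begin{equation*}
{}_i\mathscr{D}_M^{\alpha,\beta} F(t) \;=\; \frac{t^{1-\alpha}}{\Gamma(\beta+1)}\, F'(t) \;=\; \frac{t^{1-\alpha}}{\Gamma(\beta+1)}\cdot \Gamma(\beta+1)\, t^{\alpha-1}\, f(t) \;=\; f(t),
\end{equation*}
which is exactly the claimed inverse relation.

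The main (and really the only) subtlety I anticipate is the endpoint behavior at $t=a$, particularly in the case $a=0$, where $x^{\alpha-1}$ is singular. I would address this by interpreting the identity pointwise for $t>a$, which is the regime covered by the hypotheses and by the chain-rule formula (which is stated for $t>0$); the continuity of $f$ ensures the improper integral defining $F$ converges and that $F$ is continuous from the right at $a$, so no further issue arises. Once differentiability of $F$ on $\{t>a\}$ is recorded, the argument is a one-line combination of FTC with Theorem \ref{JOSE}(5).
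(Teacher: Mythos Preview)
Your proof is correct and follows exactly the same route as the paper: apply item~5 of Theorem~\ref{JOSE} to convert ${}_i\mathscr{D}_M^{\alpha,\beta}$ into $\frac{t^{1-\alpha}}{\Gamma(\beta+1)}\frac{d}{dt}$, then use the ordinary fundamental theorem of calculus to differentiate the integral and cancel the factors. Your added remarks on the endpoint case $a=0$ are more careful than the paper's own treatment, but the core argument is identical.
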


\begin{proof} In fact, using the chain rule as seen in \rm\text{Theorem} \rm\ref{JOSE}, we have
\begin{eqnarray}\label{Z1}
_{i}\mathscr{D}_{M}^{\alpha,\beta }\left( _{M}\mathcal{I}_{a}^{\alpha,\beta }f\left(
t\right) \right)  &=&\frac{t^{1-\alpha }}{\Gamma \left( \beta +1\right) }\frac{d}{dt}(_{M}\mathcal{I}_{a}^{\alpha,\beta }f\left( t\right))   \notag \\
&=&\frac{t^{1-\alpha }}{\Gamma \left( \beta +1\right) }\frac{d}{dt}\left( {\Gamma \left( \beta +1\right) }\int_{a}^{t}\frac{f\left( x\right) }{x^{1-\alpha }}dx\right)   \notag \\
&=&\frac{t^{1-\alpha }}{\Gamma \left( \beta +1\right) }\left( \frac{\Gamma
\left( \beta +1\right) }{t^{1-\alpha }}f\left( t\right) \right)   \notag \\
&=&f\left( t\right) .
\end{eqnarray}
\end{proof}

With the condition $f(a)=0$, by Theorem 6, that is, Eq.(\ref{Z1}), we have $_{M}\mathcal{I}_{a}^{\alpha,\beta } \left[ {_{i}\mathscr{D}_{M}^{\alpha,\beta }}f(t)\right]=f(t)$.

%%%%%%%%%%%%%%%%%%%%%%%%%%%%%%%%%%%%%%%%%%%%%%%%%%%%%%%%%%%%%%%%%%%%%%%%%%%%%%%%%%%%%%%%%%%%%%%%%%%%%%%%%%%%%%%%%%%%%%%%%%%%%%%%%%%%%%%%%%%%%%%%%%%%%%%%%%%%%%%%%%%%%%%%%%%%%%%%%%%%%%%%%%%%%%%%%%%%%%%%%%%%

\section{Relation with other fractional derivatives types}

In this section, we will discuss the relationship between the fractional conformable derivative proposed by Khalil et al. \cite{RMAM}, the alternative fractional derivative and the generalized alternative fractional derivative proposed by Katugampola \cite{UNK} and the $M$-fractional derivative proposed by Sousa and Oliveira \cite{JE}, with our truncated $M$-fractional derivative type.

Khalil et al. \cite{RMAM} proposed a definition of a fractional derivative, called conformable fractional derivative that refers to the classical properties of integer order calculus, given by
\begin{equation}\label{K1}
f^{(\alpha) }\left( t\right) =\underset{\varepsilon \rightarrow 0}{\lim }\frac{f\left( t+\varepsilon t^{1-\alpha}\right) -f\left( t\right) }{\varepsilon },
\end{equation}
with $\alpha\in(0,1)$ and $t>0$.

In 2014, Katugampola \cite{UNK} proposed another definition of a fractional derivative, called an alternative fractional derivative which also refers to the classical properties of integer-order calculus, given by
\begin{equation}\label{K2}
\mathcal{D}^{\alpha }f\left( t\right) =\underset{\varepsilon \rightarrow 0}{\lim }\frac{f\left( te^{\varepsilon t^{-\alpha }}\right) -f\left( t\right) }{\varepsilon },
\end{equation}
with $\alpha\in(0,1)$ and $t>0$.

In the same paper, Katugampola \cite{UNK} by means of a truncated exponential function, that is, $_{k}e^{x}$, proposed another generalized fractional derivative, given by
\begin{equation}\label{K3}
\mathcal{D}^{\alpha }_{k}f\left( t\right) =\underset{\varepsilon \rightarrow 0}{\lim }\frac{f\left(_{k}e^{\varepsilon t^{-\alpha }}t\right) -f\left( t\right) }{\varepsilon },
\end{equation}
with $\alpha\in(0,1)$ and $t>0$.

Recently, Sousa and Oliveira \cite{JE} introduced the $M$-fractional derivative $\mathscr{D}_{M}^{\alpha,\beta}$ where the parameter $\beta>0$ and $M$ is the notation to designate that the function to be derived involves the Mittag-Leffler function of one parameter, given by

\begin{equation}\label{K4}
\mathscr{D}_{M}^{\alpha,\beta }f\left( t\right) :=\underset{\varepsilon \rightarrow 0}{%
\lim }\frac {f\left( t\;\mathbb{E}_{\beta }\left( \varepsilon t^{-\alpha }\right)
\right) -f\left( t\right) }{\varepsilon },
\end{equation}
with $\alpha\in(0,1)$ and $t>0$.

It is clear that our definition of truncated $M$-fractional derivative type Eq.(\ref{J}) is more general than the fractional derivatives Eq.(\ref{K1}), Eq.(\ref{K2}), Eq.(\ref{K3}) and Eq.(\ref{K4}). We will now study particular cases involving such fractional derivatives.

Choosing $\beta=1$ and applying the limit $i\rightarrow 0$ on both sides of Eq.(\ref{J}), we have
\begin{equation}
_{1}\mathscr{D}_{M}^{\alpha,\beta }f\left( t\right)=\underset{\varepsilon \rightarrow 0}{%
\lim }\frac {f\left(t\;_{1}\mathbb{E}_{1}\left( \varepsilon t^{-\alpha }\right)
\right) -f\left( t\right) }{\varepsilon }.
\end{equation}

But, it is know that
\begin{equation}
_{1}\mathbb{E}_{1}\left( \varepsilon t^{-\alpha }\right) =\overset{1}{%
\underset{k=0}{\sum }}\frac{\left( \varepsilon t^{-\alpha }\right) ^{k}}{%
\Gamma \left( k+1\right) }=1+\varepsilon t^{-\alpha }.
\end{equation}

Thus, we conclude that
\begin{equation}
_{1}\mathscr{D}_{M}^{\alpha,\beta }f\left( t\right)=\underset{\varepsilon \rightarrow 0}{%
\lim }\frac {f\left(t+\varepsilon t^{1-\alpha }\right) -f\left( t\right) }{\varepsilon }=f^{(\alpha)}(t),
\end{equation}
which is exactly the conformable fractional derivative Eq.$(\ref{K1})$.

Choosing $\beta=1$ and applying the limit $i\rightarrow \infty$ on both sides of Eq.(\ref{J}), we have
\begin{equation}
_{\infty}\mathscr{D}_{M}^{\alpha,\beta }f\left( t\right)=\underset{\varepsilon \rightarrow 0}{%
\lim }\frac {f\left(t\;_{\infty}\mathbb{E}_{1}\left( \varepsilon t^{-\alpha }\right)
\right) -f\left( t\right) }{\varepsilon }.
\end{equation}

But, as we have
\begin{equation}
_{\infty}\mathbb{E}_{1}\left( \varepsilon t^{-\alpha }\right) =\overset{\infty}{%
\underset{k=0}{\sum }}\frac{\left( \varepsilon t^{-\alpha }\right) ^{k}}{%
\Gamma \left( k+1\right) }=e^{\varepsilon t^{-\alpha }},
\end{equation}
we conclude that
\begin{equation}
_{\infty}\mathscr{D}_{M}^{\alpha,\beta }f\left( t\right)=\underset{\varepsilon \rightarrow 0}{%
\lim }\frac {f\left(t e^{\varepsilon t^{-\alpha }}\right) -f\left( t\right) }{\varepsilon }=\mathcal{D}^{\alpha }f\left( t\right),
\end{equation}
which is exactly the alternative fractional derivative, Eq.$(\ref{K2})$.

Choosing $\beta=1$ in Eq.(\ref{J}), we have
\begin{equation}
_{i}\mathscr{D}_{M}^{\alpha,\beta }f\left( t\right)=\underset{\varepsilon \rightarrow 0}{%
\lim }\frac {f\left(t\;_{i}\mathbb{E}_{1}\left( \varepsilon t^{-\alpha }\right)
\right) -f\left( t\right) }{\varepsilon }.
\end{equation}

Remembering that
\begin{equation}
_{i}\mathbb{E}_{1}\left( \varepsilon t^{-\alpha }\right) =\overset{i}{%
\underset{k=0}{\sum }}\frac{\left( \varepsilon t^{-\alpha }\right) ^{k}}{%
\Gamma \left( k+1\right) }=e_{i}^{\varepsilon t^{-\alpha }},
\end{equation}
we have
\begin{equation}
_{i}\mathscr{D}_{M}^{\alpha,\beta }f\left( t\right)=\underset{\varepsilon \rightarrow 0}{%
\lim }\frac {f\left(e_{i}^{\varepsilon t^{-\alpha }}t\right) -f\left( t\right) }{\varepsilon }=\mathcal{D}_{i}^{\alpha }f\left( t\right),
\end{equation}
exactly the generalized fractional derivative, Eq.$(\ref{K3})$.

Finally, applying the limit $i\rightarrow \infty$ on both sides of Eq.(\ref{J}), we have
\begin{equation}
_{\infty}\mathscr{D}_{M}^{\alpha,\beta }f\left( t\right)=\underset{\varepsilon \rightarrow 0}{%
\lim }\frac {f\left(t\;_{\infty}\mathbb{E}_{\beta}\left( \varepsilon t^{-\alpha }\right)
\right) -f\left( t\right) }{\varepsilon },
\end{equation}
since
\begin{equation}
_{\infty}\mathbb{E}_{\beta}\left( \varepsilon t^{-\alpha }\right) =\overset{\infty}{%
\underset{k=0}{\sum }}\frac{\left( \varepsilon t^{-\alpha }\right) ^{k}}{%
\Gamma \left( k+1\right) }=\mathbb{E}_{\beta}\left( \varepsilon t^{-\alpha }\right),
\end{equation}
we conclude that
\begin{equation}
_{\infty}\mathscr{D}_{M}^{\alpha,\beta }f\left( t\right)=\underset{\varepsilon \rightarrow 0}{%
\lim }\frac {f\left(t\;\mathbb{E}_{\beta}\left( \varepsilon t^{-\alpha }\right)\right) -f\left( t\right) }{\varepsilon }=\mathscr{D}_{M}^{\alpha,\beta }f\left( t\right),
\end{equation}
exactly the $M$-fractional derivative, Eq.$(\ref{K4})$.

%%%%%%%%%%%%%%%%%%%%%%%%%%%%%%%%%%%%%%%%%%%%%%%%%%%%%%%%%%%%%%%%%%%%%%%%%%%%%%%%%%%%%%%%%%%%%%%%%%%%%%%%%%%%%%%%%%%%%%%%%%%%%%%%%%%%%%%%%%%%%%%%%%%%%%%%%%%%%%%%%%%%%%%%%%%%%%%%%%%%%%%%%%%%%%%%%%%%%%%%%%%

\section{Application}

In this section, we obtain the solution of the heat equation using a truncated $M$-fractional derivative type with $0<\alpha <1 $ and present some graphs about the behavior of the solution.

Consider the heat equation in one dimension given by
\begin{equation*}
\frac{\partial u\left( x,t\right) }{\partial t}=k\frac{\partial ^{2}u\left(
x,t\right) }{\partial x^{2}},\text{ }0<x<L,\text{ }t>0,
\end{equation*}
where $k$ is a positive constant.

Using a $M$-fractional derivative type, we propose an $M$-fractional heat equation given by
\begin{equation}\label{B}
\frac{\partial ^{\alpha }u\left( x,t\right) }{\partial t^{\alpha }}=k\frac{%
\partial ^{2}u\left( x,t\right) }{\partial x^{2}},\text{ }0<x<L,\text{ }t>0,
\end{equation}
where $0<\alpha <1$ and with the initial condition and boundary conditions given by
\begin{eqnarray}\label{1}
u\left( 0,t\right)  &=&0\,, t\geq 0, \\\notag
u\left( L,t\right)  &=&0\,, t\geq 0, \\\notag
u\left( x,0\right)  &=&f\left( x\right) \,, 0\leq x\leq L.\notag
\end{eqnarray}

We start, considering the so-called $M$-fractional linear differential equation with constant coefficients
\begin{equation}\label{C}
\frac{\partial ^{\alpha }v\left( x,t\right) }{\partial t^{\alpha }}\pm \mu
^{2}v\left( x,t\right) =0,
\end{equation}
where $\mu^{2}$ is a positive constant.

Using the item 5 in Theorem \ref{JOSE}, the {\rm Eq.(\ref{B})} can be written as follows
\begin{equation*}
\frac{t^{1-\alpha }}{\Gamma \left( \beta +1\right) }\frac{dv\left(
x,t\right) }{dt}\pm \mu ^{2}v\left( x,t\right) =0,
\end{equation*}
whose solution is given by
\begin{equation}\label{D}
v\left( t\right) =ce^{\pm \Gamma \left( \beta +1\right) \frac{\mu
^{2}t^{\alpha }}{\alpha }},
\end{equation}
with $0<\alpha<1$ and $\beta>0$.

Now, we will use separation of variables method to obtain the solution of the $M$-fractional heat equation. Then, considering $u\left( x,t\right) =P\left( x\right) Q\left( t\right) $ and replacing in Eq.(\ref{B}), we get
\begin{equation*}
\frac{d^{\alpha }}{dt^{\alpha }}Q\left( t\right) P\left( x\right) =k\frac{%
d^{2}}{dx^{2}}P\left( x\right) Q\left( t\right) 
\end{equation*}
which implies
\begin{equation}\label{E}
\frac{1}{kQ\left( t\right) }\frac{d^{\alpha }}{dt^{\alpha }}Q\left( t\right)
=\frac{1}{P\left( x\right) }\frac{d^{2}}{dx^{2}}P\left( x\right) =\xi .
\end{equation}

From Eq.(\ref{E}), we obtain a system of differential equations, given by
\begin{equation}\label{F}
\frac{d^{\alpha }}{dt^{\alpha }}Q\left( t\right) -k\xi Q\left( t\right) =0
\end{equation}
and
\begin{equation}\label{G}
\frac{d^{2}}{dx^{2}}P\left( x\right) -\xi P\left( x\right) =0.
\end{equation}

First, let's find the solution of Eq.(\ref{G}). For this, we must study three cases, that is, $\xi =0,$ $\xi =-\mu ^{2}$ e $\xi =\mu ^{2}.$

Case 1: $\xi =0$.

Substituting $ \xi = 0 $ into Eq.(\ref{G}), we have
\begin{equation}
\frac{d^{2}}{dx^{2}}P\left( x\right) -\xi P\left( x\right) =0,
\end{equation}
whose solution is given by $P\left( x\right) =c_{1}x+c_{2}$, with $c_{1}$ and $c_{2}$ arbitrary constant. Using the initial conditions given by Eq.(\ref{1}), we obtain that $ c_{1}=c_{2}=0$. Like this, $P\left( x\right) =0$, which implies $u(x,t)=0$ trivial solution.

Case 2: $\xi =-\mu ^{2}$.

Substituting $\xi =-\mu ^{2}$ into Eq.(\ref{G}), we get
\begin{equation*}
\frac{d^{2}}{dx^{2}}P\left( x\right) +\mu ^{2}P\left( x\right) =0,
\end{equation*}
whose solution is given by $P\left( x\right) =c_{2}\sin \left( \mu x\right) +c_{1}\cos \left( \mu x\right)$, with $c_{1}$ and $c_{2}$ arbitrary constant. Using the initial conditions Eq.(\ref{1}), we obtain $c_{1}=0$ and $0=c_{2}\sin \left( \mu x\right)$ which implies that $\mu =\frac{n\pi }{L},$ with $n=1,2,...$. Then, we obtain
\begin{equation*}
P_{n}\left( x\right) =a_{n}\sin \left( \frac{n\pi x}{L}\right) \text{ and } \mu =\frac{n\pi }{L}.
\end{equation*}

Case 3: $\xi =\mu ^{2}$.

Substituting $\xi =\mu ^{2}$ into Eq.(\ref{G}), we get
\begin{equation*}
\frac{d^{2}}{dx^{2}}P\left( x\right) -\mu ^{2}P\left( x\right) =0
\end{equation*}
whose solution is given by $P\left( x\right) =c_{1}$ $e^{\mu x}+c_{2}$ $e^{-\mu x}=A\cosh \left( \mu x\right) +B\sinh \left( \mu x\right)$, with $c_{1}$, $c_{2}$, $A$, $B$ arbitrary constant. Using the boundary conditions Eq.(\ref{1}), we have $A=0$ and $0=B\sinh \left( \mu x\right)$. As $\lambda =-\mu ^{2}<0$ and $\lambda L\neq 0$ then $\sinh \left( \mu x\right) \neq 0.$ Like this, we get $B=0$ and then $P_{n}\left( x\right) =0$, which implies $u(x,t)=0$, trivial solution.

Therefore, the solution of Eq.(\ref{G}) is given by
\begin{equation}\label{H}
P_{n}\left( x\right) =a_{n}\sin \left( \frac{%
n\pi x}{L}\right) \text{ and} \mu =\frac{n\pi }{L}.
\end{equation}

Using the Eq.( \ref{C}) and Eq.(\ref{D}), we have
\begin{equation}\label{I}
Q_{n}\left( t\right) =b_{n}\exp \left( -\Gamma \left( \beta +1\right) \left( 
\frac{n\pi }{L}\right) ^{2}\frac{k}{\alpha }t^{\alpha }\right),
\end{equation}
where $b_{n}$ are constant coefficients.

So, using the Eq.(\ref{H}) and Eq.(\ref{I}), the partial solutions of Eq.(\ref{B}), is given by
\begin{equation*}
u_{\beta}\left( x,t\right) =\underset{n=1}{\overset{\infty }{\sum }}c_{n}\sin \left( 
\frac{n\pi x}{L}\right) \exp \left( -\Gamma \left( \beta +1\right) \left( 
\frac{n\pi }{L}\right) ^{2}\frac{k}{\alpha }t^{\alpha }\right) .
\end{equation*}

Using Eq.(\ref{1}), we get
\begin{equation*}
u\left( x,0\right) =f\left( x\right) =\underset{n=1}{\overset{\infty }{\sum }%
}c_{n}\sin \left( \frac{n\pi x}{L}\right) 
\end{equation*}
which provides $c_{n}$ through
\begin{equation*}
c_{n}=\frac{2}{L}\int_{0}^{L}f\left( x\right) \sin \left( \frac{n\pi x}{L}%
\right) dx.
\end{equation*}

So, we conclude that the solution of $M$-fractional heat equation Eq.(\ref{B}), satisfying the conditions Eq.(\ref{1}), is given by
\begin{equation}\label{fig1}
u_{\beta}\left( x,t\right) =\underset{n=1}{\overset{\infty }{\sum }}\sin \left( 
\frac{n\pi x}{L}\right) \exp \left( -\Gamma \left( \beta +1\right) \left( 
\frac{n\pi }{L}\right) ^{2}\frac{k}{\alpha }t^{\alpha }\right) \left( \frac{2%
}{L}\int_{0}^{L}f\left( x\right) \sin \left( \frac{n\pi x}{L}\right)
dx\right) .
\end{equation}

Taking the limit $\beta \rightarrow 1$ in the last equation and using Eq.(\ref{J}), we have
\begin{equation}\label{fig2}
u\left( x,t\right) =\underset{n=1}{\overset{\infty }{\sum }}\sin \left( 
\frac{n\pi x}{L}\right) \exp \left( -\left( \frac{n\pi }{L}\right) ^{2}\frac{%
k}{\alpha }t^{\alpha }\right) \left( \frac{2}{L}\int_{0}^{L}f\left( x\right)
\sin \left( \frac{n\pi x}{L}\right) dx\right) ,
\end{equation}
which is exactly the solution of the fractional heat equation proposed by Çenesiz et al. \cite{YCA}.

On the other hand, taking the limit $\beta \rightarrow 1$ and $\alpha \rightarrow 1$, using Eq.(\ref{J}), we recover the solution of heat equation of integer order.
\begin{equation}\label{fig3}
u\left( x,t\right) =\underset{n=1}{\overset{\infty }{\sum }}\sin \left( 
\frac{n\pi x}{L}\right) \exp \left( -\left( \frac{n\pi }{L}\right)
^{2}kt\right) \left( \frac{2}{L}\int_{0}^{L}f\left( x\right) \sin \left( 
\frac{n\pi x}{L}\right) dx\right) .
\end{equation}

Next, we will present some plots by choosing values for the parameters $\beta$ and $\alpha$, to see the behavior of the solution presented in Eq (\ref{J}). The graphics were plotted using MATLAB 7:10 software (R2010a). For the elaboration of the following plots, we choose the function $f(x)=50x(1-x)$ and for each fixed $\beta$, we vary the $\alpha$ parameter.

\begin{figure}[h!]
\caption{Analytical solution of the $M$-fractional heat equation {\rm Eq.(\ref{fig1})}. We consider the values $ \beta = 0.5 $, $L = 1$, $k = 0.003$ and at time $t = 150$.}
\centering 
\includegraphics[width=14.0cm]{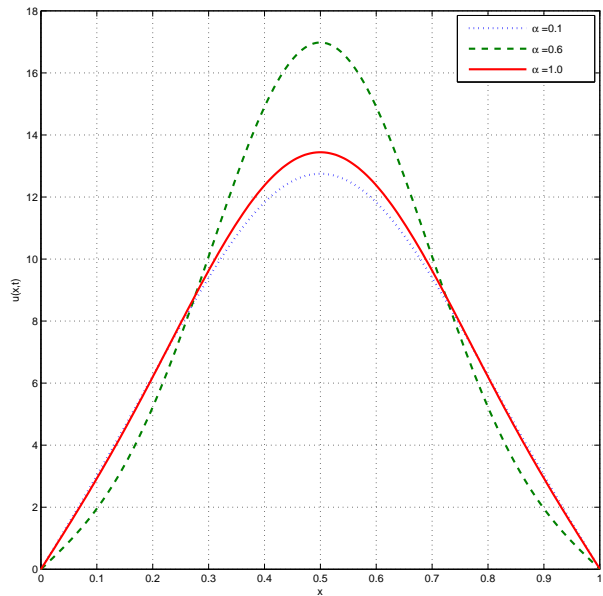} 
\label{fig:eryt1}
\end{figure}

\newpage

\begin{figure}[h!]
\caption{Analytical solution of the $M$-fractional heat equation {\rm Eq.(\ref{fig1})}. We take the values $ \beta = 1.0 $, $L = 1$, $k = 0.003$ and at time $t = 150$.}
\centering 
\includegraphics[width=14.0cm]{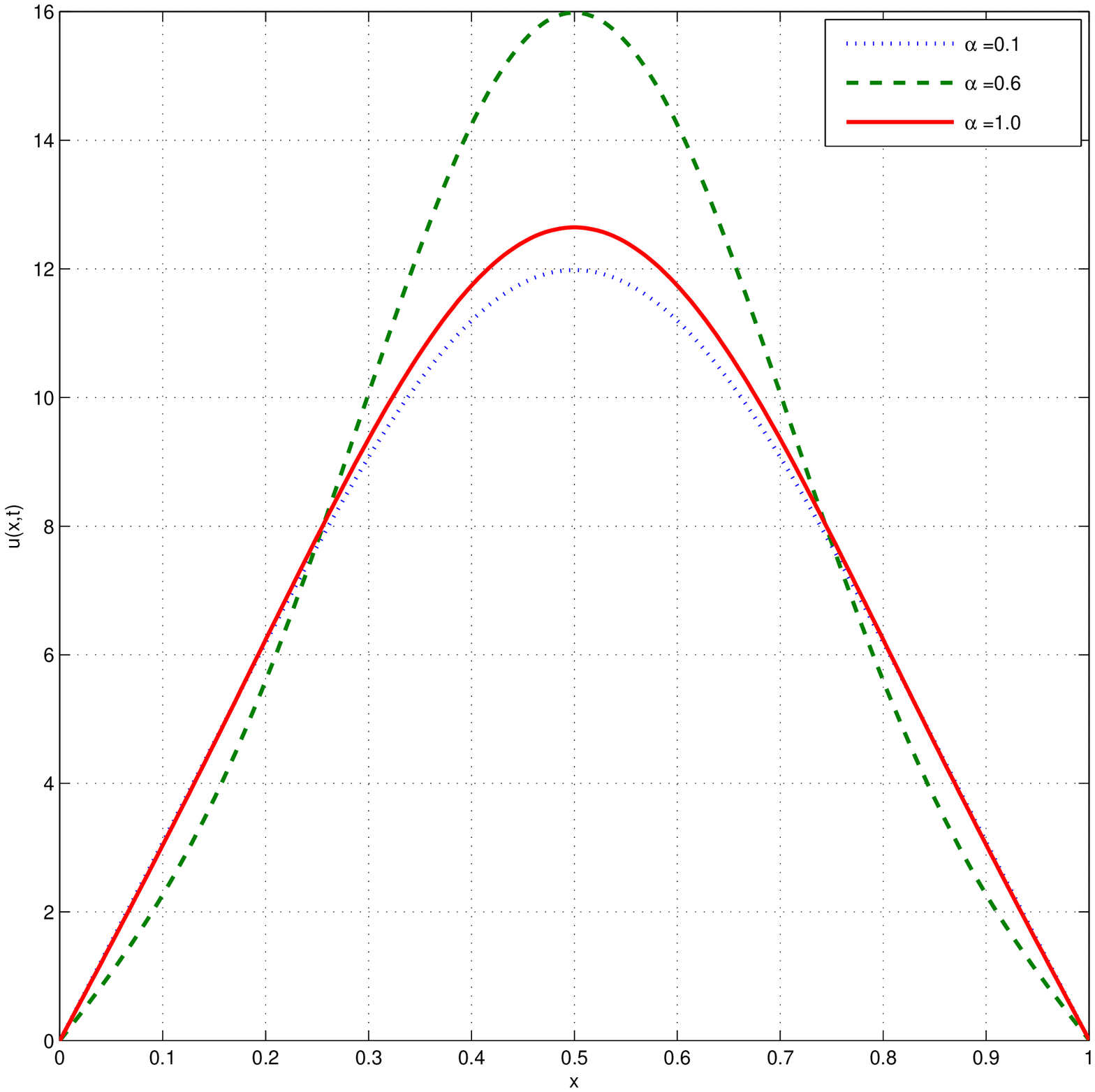} 
\label{fig:eryt1}
\end{figure}

\begin{figure}[h!]
\caption{Analytical solution of the $M$-fractional heat equation {\rm Eq.(\ref{fig1})}. We chose the values $ \beta = 2.0 $, $L = 1$, $k = 0.003$ and at time $t = 150$.}
\centering 
\includegraphics[width=14.0cm]{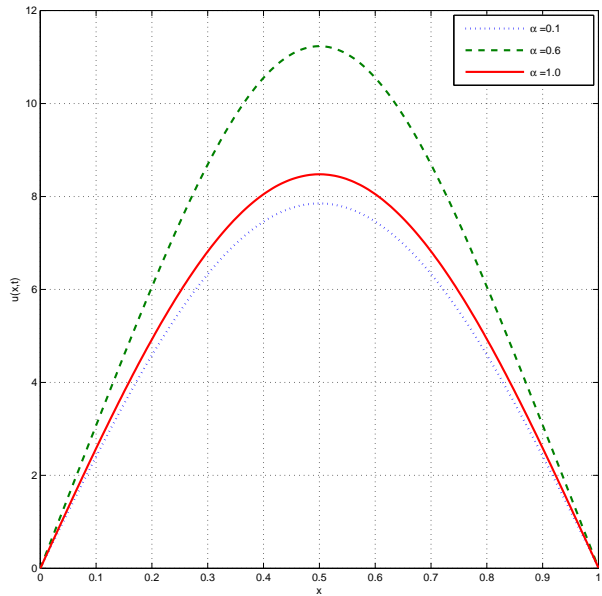} 
\label{fig:eryt1}
\end{figure}

\newpage

\section{Concluding remarks} 
We introduced a new truncated $M$-fractional derivative type for $\alpha$-differentiable functions and consequently its $M$-fractional integral, we obtained important results with respect to the properties of the integer-order derivative.

For a class of $\alpha$-differentiable functions, in the context of fractional derivatives, we conclude that this truncated $M$-fractional derivative type proposed here behaves well with respect to the classical properties of integer-order calculus. 

Using the truncated Mittag-Leffler function, it was possible to introduce a truncated $M$-fractional derivative type associated with $\alpha$-differentiable functions and consequently we obtained a very important relation with the fractional derivatives mentioned in the paper as seen in section 3. We conclude that the presented results contain as particular cases the derivatives proposed by Khalil et al. \cite {RMAM}, Katugampola \cite{UNK} and Sousa and Oliveira \cite{JE}. In addition, using our truncated $M$-fractional derivative type we performed and discussed the analytical solution of the heat equation.

\bibliography{ref}
\bibliographystyle{plain}

\end{document}